%
%
%
%
%
%
\documentclass[12pt,a4paper]{article}
\usepackage[dvips]{color}
\usepackage{indentfirst,latexsym,bm}
\usepackage{graphicx}
\usepackage{amsmath,amsthm}
\usepackage{amssymb}
\usepackage{multicol}
\usepackage{ulem}
\usepackage{amsfonts}
\usepackage{mathrsfs}
 \newtheorem{thm}{Theorem}[section]
 \newtheorem{cor}[thm]{Corollary}
 
 \newtheorem{prop}[thm]{Proposition}
 
 \theoremstyle{definition}
 \newtheorem{dfn}[thm]{Definition}

 \newtheorem{rem}{Remark}

  \DeclareMathAlphabet{\mathsfsl}{OT1}{cmss}{m}{sl}

 \newcommand{\Rnum}{\mathbb{R}}
 \newcommand{\Cnum}{\mathbb{C}}
 
 \newcommand{\Nnum}{\mathbb{N}}
 \newcommand{\mi}{\mathrm{i}}
 \newcommand{\dif}{\mathrm{d}}

 \newcommand{\abs}[1]{\left\vert#1\right\vert}
 \newcommand{\set}[1]{\left\{#1\right\}}
 \newcommand{\norm}[1]{\left\Vert#1\right\Vert}
 \newcommand{\innp}[1]{\langle {#1}\rangle}

\pagestyle{myheadings} \markright {on the complex Malliavin calculus}

\allowdisplaybreaks

\title{On a nonsymmetric Ornstein-Uhlenbeck semigroup and its generator}
\author{\rm\small
\noindent CHEN Yong\\
\noindent \footnotesize School of Mathematics and Computing Science, Hunan
University of Science and Technology,\\
\noindent \footnotesize Xiangtan, Hunan, 411201,
P.R.China. chenyong77@gmail.com\\
}
\date{}
\begin{document}
\maketitle
\maketitle \noindent {\bf Abstract } \\
 If we add a simple rotation term to both the Ornstein-Uhlenbeck semigroup and definition of the H-derivative, then analogue to the classical Malliavin calculus on the real Wiener space [I. Shigekawa, Stochastic analysis, 2004], we get a normal but nonsymmetric Ornstein-Uhlenbeck operator $L$ on the complex Wiener space. The eigenfunctions of the operator $L$ are given. In addition, the hypercontractivity for the nonsymmetric Ornstein-Uhlenbeck semigroup is shown.\\
 {\bf MSC:} 60H07,60G15.
\maketitle
\section{Introduction}
In \cite{cl}, the following stochastic differential equation is considered:
\begin{equation}\label{cp}
\left\{
      \begin{array}{ll}
      \dif Z_t=-\alpha Z_t\dif t+ \sqrt{2\sigma^2}\dif \zeta_t,\quad t\ge 0,  \\
      Z_0=z_0\in \Cnum^1,
      \end{array}
\right.
\end{equation}
where $Z_t=X_1(t)+\mi X_2(t)$, $\alpha=ae^{\mi \theta}= r+\mi \Omega$ with $a>0,\,\theta\in (-\frac{\pi}{2},\,\frac{\pi}{2})$, and $\zeta_t=B_1(t)+\mi B_2(t)$ is a complex Brownian motion. Clearly, when $\Omega\neq 0$, the generator of the process is a 2-dimensional not symmetric but normal Ornstein-Uhlenbeck (OU) operator
\begin{align}\label{ou.op}
A & =\sigma^2(\frac{\partial^2}{\partial x^2}+\frac{\partial^2}{\partial y^2})+(-rx+\Omega y)\frac{\partial}{\partial x}-(\Omega x+ry)\frac{\partial}{\partial y}
\\
&=4\sigma^2 \frac{\partial^2}{\partial z\partial \bar{z}}-\alpha z \frac{\partial}{\partial z}-\bar{\alpha}\bar{z} \frac{\partial}{\partial \bar{z}},
\end{align}
where we denote by
$\frac{\partial f}{\partial z}=\frac12 (\frac{\partial f}{\partial x}-\mi \frac{\partial f}{\partial y}), \frac{\partial f}{\partial \bar{z}}=\frac12 (\frac{\partial f}{\partial x}+\mi \frac{\partial f}{\partial y})$
the formal derivative of $f$ at point $z=x+\mi y$. Note that $\Im(\alpha)\neq 0$ in Eq.(\ref{cp}) is the key point for the nonsymmetric property. The eigenfunctions of $A$ are the so called complex Hermite polynomials\footnote{It is called the Hermite-Laguerre-It$\hat{\rm{o}}$ polynomials in \cite{cl}.} which can be generated iteratively by the complex creation operator acting on the constant $1$. Let $B=\begin{bmatrix} -r & \Omega\\ -\Omega & -r  \end{bmatrix}$ and $B_0=\begin{bmatrix} \cos \Omega t & \sin \Omega t\\ -\sin \Omega t & \cos \Omega t  \end{bmatrix}$.
Then $e^{tB}=e^{-rt}B_0(t) $ and the associated OU semigroup of $ A$ is
\begin{align}
   P_t \varphi (z_0)&=\int_{\Rnum^2}\,\varphi(e^{-rt}B_0(t) z_0+\sqrt{1-e^{-2rt}}z)\, \mu(\dif z)\nonumber\\
   &=\int_{\Cnum}\,\varphi(e^{-\alpha t} z_0+\sqrt{1-e^{-2rt}}z)\, \mu(\dif z),\label{compp}
\end{align}
where the stationary distribution is
$\dif\mu = \frac{r}{2\pi\sigma^2 }\exp\set{-\frac{r(x^2+y^2)}{2\sigma^2}}\dif x\dif y$ and we write a function $\varphi(x,y)$ of the two real variables $x$ and $y$ as the function $\varphi(z)$ of the complex argument $x+\mi y$ (i.e.,
we use the complex representation of $\Rnum^2$ in (\ref{compp})). For simplicity, we can choose that $a=1$ and $r=\sigma^2=\cos \theta$ then (\ref{compp}) becomes \begin{equation}
   \int_{\Cnum}\,\varphi(e^{- (\cos \theta+\mi \sin \theta)t} z_0+\sqrt{1-e^{-2t\cos \theta}}z)\, \mu(\dif z).
\end{equation}

If we let $z_0,z$ be in the infinite dimensional space $(C_0([0,T]\to \Cnum^1)$, we can define the nonsymmetric OU semigroup on $(C_0([0,T]\to \Cnum^1)$ (see Definition~\ref{ddf_ou}). This idea is similar to the symmetric case \cite{shg}. This is the topic of Section~\ref{ss2}.

The topic of Section~\ref{ss3} is how to obtain a
concrete expression of the generator $L$ of the above OU semigroup with rotation. We extend the definition of the Gateaux derivative and the H-derivative to the function $F:\,B\to \Cnum$ 
and consider the derivative of the function $F(x+e^{\mi \theta}t y)$ with $t\in \Rnum$ and $\theta\in (-\frac{\pi}{2},\frac{\pi}{2})$ at $t=0$ (i.e., here the rotation term in the derivative corresponds to the one in the above OU semigroup). Furthermore, since we consider complex-value functions, we need the conjugate-linear functional. This idea also comes from the symmetric case\cite{shg}.

In Section~\ref{ss4}, we recall the It$\hat{\rm{o}}$-Wiener chaos decomposition and give all the eigenfunctions of the generator $L$. In addition, we show the hypercontractivity for the above OU semigroup.
\section{The nonsymmetric OU semigroup }\label{ss2}

By the complex representation of $\Rnum^2$, the planar Brownian motion $(B^1, B^2)$ will be written $B=B^1+\mi B^2$.
Let $H_1$ be the 1-dimensional Cameron-Martin space \cite{shg}, and denote $H$ the complex Hilbert space $  H=H_1+\mi H_1$
with the natural inner product
\begin{equation}
   (h,k)_{H}=\int_0^T\,\dot{h}(s)\overline{\dot{k}(s)}\dif s.
\end{equation}
Clearly, one can choose a c.o.n.s of $H$ to be $\set{\frac{\varphi_m}{\sqrt{2}},\frac{\bar{\varphi}_m}{\sqrt2}:\,m=1,2,\dots}$.

We look the 2-dimensional Wiener space as a complex Wiener space $(C_0([0,T]\to \Cnum^1),\,\mu)$. The characteristic function of $\mu$ is
\begin{equation}\label{character func}
  \int_{B} \exp\set{\sqrt{-1}\innp{\omega, \varphi}}\,\dif \mu(\omega)=\exp\set{-\frac12 |\varphi|^2_{H^*} },\quad \forall \varphi\in B^*.
\end{equation}
\begin{dfn}\label{ddf_ou}
Let the above notation prevail. We define transition probability on $B$ as follows.
For $\theta\in(-\frac{\pi}{2},\frac{\pi}{2}),\, t\ge 0,\Omega \in \Rnum, \,x\in B,\,A\in \mathcal{B}(B)$ (the Borel $\sigma$-field generated by all open sets),
\begin{equation}\label{semigp}
   P_t(x,\,A)=\int_B\,1_A(e^{-(\cos\theta+\mi \sin\theta)t}x+\sqrt{1-e^{-2t\cos \theta}}y)\,\mu(\dif y).
\end{equation}
\end{dfn}
The following property about the measure $\mu$ is well known.
\begin{prop}\label{rotat}
   For any $a\in \Rnum$, the induced measure of $\mu$ under the mapping $x\mapsto e^{\mi a}x$ is identical to $\mu$, that is to say, $\mu$ is rotation invariant. In addition, for any $t\ge 0$, denote the induced measure of $\mu$ under the mapping $x\mapsto \sqrt{t}x$ by $\mu_t$, then $\mu_t\star \mu_{s}=\mu_{t+s}$ ($\ast$ is the convolution operator).
\end{prop}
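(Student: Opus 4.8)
The plan is to prove both assertions by the method of characteristic functionals, using the identity~(\ref{character func}) together with the fact that a Borel probability measure on the separable Banach space $B$ is uniquely determined by its characteristic functional. Note first that for fixed $a\in\Rnum$ and $t\ge 0$ the maps $x\mapsto e^{\mi a}x$ and $x\mapsto\sqrt{t}\,x$ are bounded linear operators of $B$ into itself, hence Borel measurable, so the push-forward measures are well-defined Borel probability measures on $B$.

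For the rotation invariance, the key point is that at each time $s$ the map $x\mapsto e^{\mi a}x$ acts on $\Cnum\cong\Rnum^2$ as an orthogonal rotation. Consequently, for the (real) dual pairing between $B$ and $B^*$ one has the elementary identity $\innp{e^{\mi a}\omega,\varphi}=\innp{\omega,e^{-\mi a}\varphi}$, and moreover $e^{-\mi a}\varphi$ again belongs to $B^*$ with $|e^{-\mi a}\varphi|_{H^*}=|\varphi|_{H^*}$ (multiplication by $e^{-\mi a}$ preserves $|\dot h(s)|$ pointwise, hence the $H$-norm, and, being the transpose of the isometric automorphism $x\mapsto e^{\mi a}x$ of $B$, it carries $B^*$ into itself). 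Hence, writing $\nu$ for the image of $\mu$ under $x\mapsto e^{\mi a}x$, the change of variables $\omega\mapsto e^{\mi a}\omega$ gives, for every $\varphi\in B^*$,
\begin{equation*}
\int_B\exp\set{\sqrt{-1}\,\innp{\omega,\varphi}}\,\dif\nu(\omega)
=\int_B\exp\set{\sqrt{-1}\,\innp{\omega,e^{-\mi a}\varphi}}\,\dif\mu(\omega)
=\exp\set{-\tfrac12|e^{-\mi a}\varphi|_{H^*}^2}
=\exp\set{-\tfrac12|\varphi|_{H^*}^2},
\end{equation*}
which is exactly the characteristic functional of $\mu$; by uniqueness, $\nu=\mu$.

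For the convolution identity, the same substitution with $\varphi$ replaced by $\sqrt{t}\,\varphi$ shows that $\mu_t$ has characteristic functional $\varphi\mapsto\exp\set{-\tfrac{t}{2}|\varphi|_{H^*}^2}$. Realising $\mu_t\star\mu_s$ as the law of the sum $\omega_1+\omega_2$ of two independent $B$-valued random variables with laws $\mu_t$ and $\mu_s$, its characteristic functional is the product $\exp\set{-\tfrac{t}{2}|\varphi|_{H^*}^2}\exp\set{-\tfrac{s}{2}|\varphi|_{H^*}^2}=\exp\set{-\tfrac{t+s}{2}|\varphi|_{H^*}^2}$, which is the characteristic functional of $\mu_{t+s}$; uniqueness again gives $\mu_t\star\mu_s=\mu_{t+s}$ (the case $t=0$ being $\mu_0=\delta_0$, still consistent with the formula).

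The computations above are routine; the only step that needs a word of justification is the uniqueness of a Borel probability measure on the infinite-dimensional space $B$ with prescribed characteristic functional, and this is where a little care is required. It is standard: the cylinder sets generate $\mathcal{B}(B)$, and the characteristic functional determines all finite-dimensional marginals $(\innp{\omega,\varphi_1},\dots,\innp{\omega,\varphi_n})_*\mu$ on $\Rnum^n$ through the classical Fourier uniqueness theorem. Alternatively, one may simply observe that $\mu$, $\nu$, $\mu_t\star\mu_s$ and $\mu_{t+s}$ are all centred Gaussian measures on $B$, and a Gaussian measure is determined by its covariance, which the computations show to coincide in each case.
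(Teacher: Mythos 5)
Your proposal is correct. Note, however, that the paper itself offers no proof of Proposition~\ref{rotat}: it simply declares the property ``well known'' and moves on to use it (the rotation invariance is invoked in the Chapman--Kolmogorov computation, and the scaling/convolution identity in the same display), so there is no argument of the author's to compare yours against. Your route --- computing the characteristic functionals of the push-forward measures from (\ref{character func}), using that pointwise multiplication by $e^{\mi a}$ is a real-orthogonal map of $H$ so that $|e^{-\mi a}\varphi|_{H^*}=|\varphi|_{H^*}$, and then appealing to uniqueness of a Borel probability measure on $B$ with given characteristic functional (or, equivalently, to the fact that a centred Gaussian measure is determined by its covariance) --- is the standard justification and is sound, including your care in reading the pairing in (\ref{character func}) as the real dual pairing, which is what makes the Fourier uniqueness argument legitimate, and your handling of the degenerate case $t=0$. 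The only cosmetic caveat is that the identity $\innp{e^{\mi a}\omega,\varphi}=\innp{\omega,e^{-\mi a}\varphi}$ should be understood, as you indicate, as defining $e^{-\mi a}\varphi$ to be the pullback functional $\varphi\circ(e^{\mi a}\,\cdot\,)$ on the real dual; with that reading the computation is complete.
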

An argument similar to the one used in the real case \cite[p21]{shg} shows that $P_t(x,\,A)$ satisfies the Chapman-Kolmogorov equation.
\begin{align*}
   & \int_{B} P_t(x,\,\dif y) P_s(y,\,A)\\
   &= \int_B P_s(e^{-(\cos\theta+\mi \sin\theta)t}x+\sqrt{1-e^{-2t\cos \theta}}y,\,A)\mu(\dif y)\\
   &=\int_B \int_B 1_A (e^{-(\cos\theta+\mi \sin\theta)s}(e^{-(\cos\theta+\mi \sin\theta)t}x+\sqrt{1-e^{-2t\cos \theta}}y)+\sqrt{1-e^{-2s\cos \theta}}z)\mu (\dif z) \mu(\dif y)\\
   &=\int_B \int_B 1_A (e^{-(\cos\theta+\mi \sin\theta)(s+t)}x+e^{-s\cos \theta}\sqrt{1-e^{-2t\cos \theta}}y+\sqrt{1-e^{-2s\cos \theta}}z)\mu (\dif z) \mu(\dif y)\\
   &\quad (\text{by the rotation invariant of the measure $\mu(\dif y)$})\\
   &=\int_B \int_B 1_A (e^{-(\cos\theta+\mi \sin\theta)(s+t)}x+y)\mu_{e^{-2s\cos \theta}(1-e^{-2t\cos \theta})}*\mu_{1-e^{-2s\cos \theta}}(\dif y)\\
   &=\int_B 1_A (e^{-(\cos\theta+\mi \sin\theta)(s+t)}x+\sqrt{1-e^{2(s+t)\cos \theta}}y)\mu(\dif y)\\
   &=P_{s+t}(x,\,A).
\end{align*}

The associated Markov process to $P_t(x,\,A)$ is called a \textbf{complex-valued Ornstein-Uhlenbeck process}.
Similar to the real case \cite[Proposotion 2.2]{shg}, it follows Kolmogorov's critertion and the rotation invariance of $\mu$ that the Ornstein-Uhlenbeck process is realized as a measure on $C([0,\,\infty)\to B)$.

The associated semigroup $\set{T_t,\,t\ge 0}$ is defined as follows: for a bounded Borel measurable function $F$,
\begin{equation}\label{sd_ou}
   T_tF(x)=\int_B\,F(e^{-(\cos \theta +\mi\sin\theta)t}x+\sqrt{1-e^{-2t\cos \theta}}y)\,\mu(\dif y).
\end{equation}
An argument similar to the one used in \cite[Proposition 2.3, 2.4]{shg} shows that
\begin{prop}
   $\mu$ is a unique invariant measure, i.e,
   \begin{equation*}
      \int_{B}P(t,\,A)\mu(\dif x)=\mu(A),\quad \forall A\in \mathcal{B}(B).
   \end{equation*}
And $\set{T_t,\,t\ge 0}$ is a strongly continuous contraction semigroup in $L^p(B,\mu)\,(p\ge 1)$.
\end{prop}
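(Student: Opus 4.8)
The plan is to transcribe the real-variable argument of \cite[Proposition 2.3, 2.4]{shg} and to carry the whole thing on the characteristic functional (\ref{character func}) of $\mu$. Write $\alpha=\cos\theta+\mi\sin\theta$, and for $t\ge0$ let $R_t\colon B\to B$ be the bounded operator sending a path to its pointwise product with the scalar $e^{-\alpha t}$, and put $c_t=\sqrt{1-e^{-2t\cos\theta}}$, so that (\ref{sd_ou}) reads $T_tF(x)=\int_B F(R_tx+c_ty)\,\mu(\dif y)$. First I would record the elementary properties of $R_t$: identifying $\Cnum$ with $\Rnum^2$, multiplication by the unit modulus factor $e^{-\mi t\sin\theta}$ is an orthogonal transformation while multiplication by $e^{-t\cos\theta}$ is a positive scalar; hence the dual operator $R_t^{*}$ on $B^{*}$ is multiplication by $e^{-\bar\alpha t}$, one has $\norm{R_t^{*}}\le e^{-t\cos\theta}$, and $\abs{R_t^{*}\varphi}_{H^{*}}=e^{-t\cos\theta}\abs{\varphi}_{H^{*}}$ for every $\varphi\in B^{*}$ (the last equality using the rotation invariance of $\abs{\cdot}_{H^{*}}$ from Proposition~\ref{rotat}).

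For invariance it suffices to test $\int_B T_tF\,\dif\mu=\int_B F\,\dif\mu$ on the total family $F_\varphi(x)=\exp\set{\mi\innp{x,\varphi}}$, $\varphi\in B^{*}$. Using the independence of the two $\mu$-integrations together with (\ref{character func}) I would get
\begin{equation*}
T_tF_\varphi(x)=\exp\set{\mi\innp{x,R_t^{*}\varphi}}\,\exp\set{-\tfrac12 c_t^2\abs{\varphi}_{H^{*}}^2},
\end{equation*}
and then a second integration against $\mu$, again by (\ref{character func}) and the identity $c_t^2+e^{-2t\cos\theta}=1$, gives $\int_B T_tF_\varphi\,\dif\mu=\exp\set{-\tfrac12\abs{\varphi}_{H^{*}}^2}=\int_B F_\varphi\,\dif\mu$. (Alternatively, Proposition~\ref{rotat} directly identifies the law of $R_tx+c_ty$ under $\mu\otimes\mu$ with $\mu_{e^{-2t\cos\theta}}\star\mu_{c_t^2}=\mu_1=\mu$.) Because the $F_\varphi$ pin down all finite-dimensional projections, hence the Borel probability measure on the separable space $B$, this upgrades to $\int_B P_t(x,A)\,\mu(\dif x)=\mu(A)$ for all $A$.

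For uniqueness, let $\nu$ be any invariant probability measure and $\hat\nu(\varphi)=\int_B\exp\set{\mi\innp{x,\varphi}}\nu(\dif x)$ its characteristic functional. Feeding $F_\varphi$ into $\int_B T_tF\,\dif\nu=\int_B F\,\dif\nu$ and using the displayed formula, I obtain $\hat\nu(\varphi)=\exp\set{-\tfrac12 c_t^2\abs{\varphi}_{H^{*}}^2}\,\hat\nu(R_t^{*}\varphi)$ for every $t\ge0$. Letting $t\to\infty$, $c_t^2\to1$ and $\norm{R_t^{*}\varphi}\le e^{-t\cos\theta}\to0$; since $\abs{e^{\mi u}-1}\le\min\set{2,\abs{u}}$, dominated convergence gives $\hat\nu(R_t^{*}\varphi)\to1$, whence $\hat\nu(\varphi)=\exp\set{-\tfrac12\abs{\varphi}_{H^{*}}^2}$ and therefore $\nu=\mu$ by Fourier uniqueness on $B$.

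Finally, the semigroup statement. The identity $T_{t+s}=T_tT_s$ is exactly the Chapman--Kolmogorov computation already carried out, and each $T_t$ is positivity preserving with $T_t1=1$. For $F\in L^p(B,\mu)$, $p\ge1$, Jensen's inequality for the probability kernel $P_t(x,\cdot)$ yields $\abs{T_tF(x)}^p\le T_t(\abs{F}^p)(x)$; integrating against $\mu$ and using the invariance just proved, $\norm{T_tF}_{L^p}^p\le\int_B T_t(\abs{F}^p)\,\dif\mu=\norm{F}_{L^p}^p$, so $T_t$ is an $L^p$-contraction. For strong continuity I would first treat $F\in C_b(B)$: for fixed $x$ one has $R_tx+c_ty\to x$ in $B$ as $t\downarrow0$ for every $y$, so $T_tF(x)\to F(x)$ by dominated convergence, and then $\norm{T_tF-F}_{L^p}\to0$ by dominated convergence once more (dominating constant $2\norm{F}_\infty$, $\mu(B)=1$); an $\varepsilon/3$-argument using $\sup_{t}\norm{T_t}_{L^p\to L^p}\le1$ and the density of $C_b(B)$ in $L^p(B,\mu)$ (Borel measures on the Polish space $B$ are regular) extends this to all of $L^p(B,\mu)$. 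I expect the only genuinely delicate points to be, in the uniqueness step, that $R_t^{*}$ really contracts the $B^{*}$- and $H^{*}$-norms to $0$ as $t\to\infty$ --- precisely where $\cos\theta>0$, i.e.\ $\theta\in(-\tfrac\pi2,\tfrac\pi2)$, enters --- and the appeal to uniqueness of measures with prescribed characteristic functional on the infinite-dimensional space $B$; the rest is a faithful transcription of the real-valued case.
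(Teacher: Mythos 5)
Your proof is correct and follows essentially the route the paper intends, since the paper itself only cites the symmetric case \cite[Propositions 2.3, 2.4]{shg}: invariance via the rotation-invariance and convolution identities of Proposition~\ref{rotat} (equivalently the characteristic functional (\ref{character func})), uniqueness by letting $t\to\infty$ where $\cos\theta>0$ is used, and contraction plus strong continuity by Jensen, invariance, and density of nice functions. The only cosmetic point is the duality convention: with the complex-bilinear pairing the pullback $R_t^{*}$ is multiplication by $e^{-\alpha t}$ rather than $e^{-\bar\alpha t}$ (and $\innp{x,\varphi}$ in (\ref{character func}) must be read through the underlying real duality), but this affects neither the norm identity $\abs{R_t^{*}\varphi}_{H^{*}}=e^{-t\cos\theta}\abs{\varphi}_{H^{*}}$ nor any subsequent step.
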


\section{The Ornstein-Uhlenbeck operator and the complex H-derivative}\label{ss3}
The generator of $\set{T_t}$ is called the Ornstein-Uhlenbeck operator, denoted by $L$. We will obtain a concrete expression of $L$ in this section.
Since there is a rotation term in the transition probabilities $P_t(x,A)$, to obtain a concrete expression of $L$, we need the complex H-derivative along a direction $\theta\in (-\frac{\pi}{2},\,\frac{\pi}{2})$.

\begin{dfn}
   A function $F:\,B\to \Cnum$ is {\it complex Gateaux differentiable} at $x\in B$ along the direction $\theta\in (-\frac{\pi}{2},\frac{\pi}{2})$ if there exist $\varphi_1,\,\varphi_2 \in B^*$ such that
   \begin{equation}
      \frac{\dif }{\dif t} F(x+e^{\mi \theta} ty)\big|_{t=0}=\innp{y,\,\varphi_1}+{\innp{\bar{y},\,\varphi_2}},\quad \forall y\in B.
   \end{equation}
$(\varphi_1,\varphi_2)$ is called a Gateaux derivative of $F$ at $x$ along the direction $\theta$, denoted by $\mathrm{G}_{\theta} F(x)$.
\end{dfn}
\begin{rem}
  Here we look $\varphi_1$ as a linear functional on $B$, and $\varphi_2$ a conjugate-linear functional. And we inherit the notation in \cite{shg} that
  \begin{equation*}
     _{B}\innp{x,\,\mathrm{G}_{\theta} F(x)}_{B^*}= \innp{x,\,\varphi_1}+{\innp{\bar{x},\,\varphi_2}}.
  \end{equation*}
\end{rem}
\begin{dfn}
   A function $F:\,B\to \Cnum$ is {\it complex H-differentiable} at $x\in B$ along the direction $\theta\in (-\frac{\pi}{2},\frac{\pi}{2})$ if there exist $h_1,\,h_2 \in H$ such that
   \begin{equation}
      \frac{\dif }{\dif t} F(x+e^{\mi \theta} th)\big|_{t=0}=\innp{h,\,h_1}+{\innp{h_2,\,h}},\quad \forall h\in H.
   \end{equation}
$(h_1,h_2)$ is called a {\it complex $H$-derivative} of $F$ at $x$ along the direction $\theta$, denoted by $\mathrm{D}_{\theta} F(x)$. When $\theta=0$, we denote $\mathrm{D}_{\theta} F(x)$ by $\mathrm{D}F(x)$ instead.
\end{dfn}

We can define higher order differentiability. For simplicity, we only present the 2-th case here.
\begin{dfn}\label{dn1}
$F$ is said to be 2-th $H$-differentiable along the direction $\theta$ if there exists a mapping $(\Phi_1,\,\Phi_2,\,\Phi_3,\,\Phi_4)$: $H\times H\to \Cnum^4$ such that $ \forall h_1,h_2\in H,$
\begin{align}\label{2-derivative}
   \frac{\partial ^2}{\partial t_1\partial t_2}F(x+e^{\mi \theta}t_1h_1+t_2h_2)\big|_{t_1=t_2=0}&=\sum_{j=1}^4 \Phi_j(h_1,h_2):=\Phi(h_1,h_2),
 \end{align}
 where $\Phi_1$ and $\bar{\Phi}_2$ are the bilinear forms \footnote{Here the bar is used for the conjugate instead of for the closure operator.}, and $\Phi_3$  and $\bar{\Phi}_4$ are the sesquilinear forms\footnote{The definition of sesquilinear is that the first argument is linear and the second one is conjugate-linear.}. $\Phi$ is called the 2-th H-derivative of $F$ at x along $\theta$, denoted by $\mathrm{D D}_{\theta} F(x)$.
\end{dfn}
\begin{dfn}
Let $\Phi$ be as in Definition~\ref{dn1}. $\Phi$ is said to be of trace class if the supremum
\begin{equation*}
   \sup \sum_{n=1}^{\infty} \sum_{i=1}^4\abs{\Phi_i(h_n,k_n)}
\end{equation*}
is finite, where $k_n$ and $h_n$ run over all c.o.n.s of $H$.
Furthermore, the trace of $\Phi$ is defined by
\begin{equation}\label{trac}
   \mathrm{tr}\, \Phi = \sum_{n=1}^{\infty} \Phi_1(h_n,\,\bar{h}_n)+  \Phi_2(h_n,\,\bar{h}_n)+ \Phi_3(h_n,\,h_n)+ \Phi_4(h_n,\,h_n).
\end{equation}
Here $\set{h_n}$ is a c.o.n.s of $H$, and this does not depend on a choice of c.o.n.s.
\end{dfn}
\begin{rem}
  An argument similar to the one used in \cite[p44]{rs} shows that there exist  bounded conjugate-linear operators $A_1,\,A_2$ such that $\Phi_1(h_1,h_2)=(h_1, A_1h_2)$ and $\Phi_2=(A_2h_2,h_1)$. 
\end{rem}

$\mathcal{S}$ stands for all functions $F:B\to \Cnum$ such that there exist $n\in \Nnum, \varphi_1,\dots, \varphi_n\in B^*,\,f\in C^{\infty}(\Cnum ^n)$ so that
\begin{equation}\label{sfc}
   F(x)=f(\innp{x,\varphi_1},\,\innp{x,\varphi_2},\dots,\innp{x,\varphi_n}).
\end{equation} Here we assume that $f$ with its derivatives has polynomial growth.
If $F\in \mathcal{S}$ , then the two derivative are given in the following forms.
Let $z_j=\innp{x,\varphi_j},\,j=1,\dots,n$ and denote
\begin{equation*}
   \partial_j f= \frac{\partial}{\partial z_j}f(z_1,\dots,z_n) ,\quad \bar{\partial}_j f=\frac{\partial}{\partial \bar{z}_j}f(z_1,\dots,z_n) .
\end{equation*}
If $\varphi\in B^*$, $c\varphi$ means that $(c\varphi)(x)=c\varphi(x)$. Then the G$\hat{a}$teaux derivative is
\begin{align}
   \mathrm{G}_{\theta} {F}(x)
   =\big( e^{\mi \theta}\sum_{j=1}^n \varphi_j \partial_j f,\quad e^{-\mi \theta} \sum_{j=1}^n  \varphi_j\bar{\partial}_j f\big),\\
  _{B}\innp{x,\,\mathrm{G_{\theta}} F(x)}_{B^*}
 =\sum_{j=1}^n[e^{\mi\theta}z_j\partial_j f +e^{-\mi \theta}\bar{z_j}\bar{\partial}_j f ].
\end{align}
The H-derivative is given by
\begin{align}
   \mathrm{D}_{\theta}F(x)&= \big( e^{\mi \theta}\sum_{j=1}^n \varphi_j \partial_j f,\quad e^{-\mi \theta} \sum_{j=1}^n  \varphi_j\bar{\partial}_j f\big),\\
   \mathrm{D}_{\theta}F(x)(h)&=\sum_{j=1}^n[e^{\mi\theta}\innp{h,\,\varphi_j }\partial_j f +e^{-\mi \theta}\innp{\varphi_j, \,h}\bar{\partial}_j f ],\label{1-deriva}
   \end{align}
where we adopt the convention that $B^*$ is the subspace of $H^*$.
(\ref{1-deriva})  implies that the 2-th H-derivative is given by
\begin{align*}
   \mathrm{D} \mathrm{D}_{\theta}^2 F(x)(h_1,h_2)&=\sum_{j,\,k=1}^n [ e^{\mi \theta} \innp{h_1,\varphi_j}(\innp{h_2,\varphi_k}\partial_{k}\partial_{j} f + \innp{\varphi_k,\, h_2})\bar{\partial}_{k}\partial_{j} f )\nonumber \\
   &+ e^{ -\mi\theta}\innp{\varphi_j,\, h_1}(\innp{h_2,\varphi_k}\partial_{k}\bar{\partial}_{j} f +\innp{\varphi_k,\, h_2}\bar{\partial}_{j}\bar{\partial}_{k} f )].
\end{align*}
If, in addition, $\set{\frac{\varphi_m}{\sqrt{2}},\,\frac{\bar{\varphi}_m}{\sqrt{2}}}$ is an
orthonormal system of $H^*$,
\begin{equation}
   \mathrm{tr}\,\mathrm{D D}_{\theta} F(x)=4\cos \theta \sum_{j=1}^n \partial_{j}\bar{\partial}_{j} f.
\end{equation}
\begin{prop}\label{pp3.3}
   For $F\in \mathcal{S}$,
   \begin{equation}\label{3-gene}
      L F(x)= \mathrm{tr}\,\mathrm{D D}_{\theta} F(x)- _{B}\innp{x,\,\mathrm{G_{\theta}} F(x)}_{B^*}.
   \end{equation}
\end{prop}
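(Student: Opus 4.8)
\
The strategy is to compute $LF=\lim_{t\downarrow 0}t^{-1}(T_tF-F)$ directly from the representation (\ref{sd_ou}), after first choosing a convenient representation of $F$. Applying Gram--Schmidt in $H^{*}$, we may assume that $\set{\varphi_m/\sqrt2,\ \bar\varphi_m/\sqrt2:\ m=1,\dots,n}$ is an orthonormal system of $H^{*}$, so that the trace identity $\mathrm{tr}\,\mathrm{DD}_{\theta}F(x)=4\cos\theta\sum_{j=1}^n\partial_j\bar\partial_j f$ recorded above applies; in this normalization the random variables $z_j=\innp{x,\varphi_j}$, $j=1,\dots,n$, are centered and jointly Gaussian under $\mu$. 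Write $\alpha=\cos\theta+\mi\sin\theta=e^{\mi\theta}$ and $c(t)=\sqrt{1-e^{-2t\cos\theta}}$, so that $c(0)=0$ and $c(t)^2=1-e^{-2t\cos\theta}$, whence $c(t)^2/t\to2\cos\theta$. By (\ref{sd_ou}),
\begin{equation*}
   T_tF(x)=\int_B f\bigl(e^{-\alpha t}z_1+c(t)w_1,\dots,e^{-\alpha t}z_n+c(t)w_n\bigr)\,\mu(\dif y),\qquad w_j:=\innp{y,\varphi_j},
\end{equation*}
and the plan is to Taylor-expand $f$ to second order in the Wirtinger sense --- i.e.\ in the variables $z_j$ and $\bar z_j$ --- about the base point $(e^{-\alpha t}z_1,\dots,e^{-\alpha t}z_n)$ with increments $c(t)w_j$ and $c(t)\bar w_j$, then integrate in $y$ and finally differentiate in $t$ at $t=0$.

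Carrying this out, the zeroth-order term is $f(e^{-\alpha t}z_1,\dots,e^{-\alpha t}z_n)$ (it does not depend on $y$), and
\begin{equation*}
   \frac{\dif}{\dif t}\Big|_{t=0}f\bigl(e^{-\alpha t}z_1,\dots,e^{-\alpha t}z_n\bigr)=-\sum_{j=1}^n\bigl(\alpha z_j\partial_j f+\bar\alpha\bar z_j\bar\partial_j f\bigr)=-\sum_{j=1}^n\bigl(e^{\mi\theta}z_j\partial_j f+e^{-\mi\theta}\bar z_j\bar\partial_j f\bigr),
\end{equation*}
which is exactly $-\,{}_{B}\innp{x,\mathrm{G_{\theta}}F(x)}_{B^*}$, in view of the formula for that pairing recorded just before the statement. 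The first-order term in the expansion integrates to $0$ because $\mu$ is centered. In the second-order term, the contributions carrying $w_jw_k$ and $\bar w_j\bar w_k$ integrate to $0$ by the rotation invariance of $\mu$ (Proposition~\ref{rotat}: replacing $y$ by $e^{\mi a}y$ multiplies $w_jw_k$ by $e^{2\mi a}$, which forces the integral to vanish), and only the contributions carrying $w_j\bar w_k\,\partial_j\bar\partial_k f$ survive; there the coefficient $c(t)^2$, together with the mixed moments $\int_B w_j\bar w_k\,\mu$ of the normalized Gaussian $\mu$, reproduces --- after division by $t$ and the limit $t\downarrow0$, using $c(t)^2/t\to2\cos\theta$ --- precisely $\mathrm{tr}\,\mathrm{DD}_{\theta}F(x)=4\cos\theta\sum_j\partial_j\bar\partial_j f$, by the very computation underlying the displayed trace identity. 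Adding the two surviving contributions gives (\ref{3-gene}).

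The only genuine obstacle is the analytic bookkeeping: one must justify differentiation under the integral sign (equivalently, interchanging $\lim_{t\downarrow0}$ with $\int_B(\cdot)\,\mu(\dif y)$) and show that the integrated Taylor remainder $R(t,x)$ obeys $t^{-1}R(t,x)\to0$. Both follow in the usual way from the polynomial growth assumed on $f$ and all its derivatives together with the fact that $\mu$, being Gaussian, has moments of every order: this produces a $\mu$-integrable majorant uniform for $t$ near $0$, so that writing $R$ in integral form and bounding the third-order derivatives of $f$ along the connecting segment makes the estimate routine, and the same domination upgrades the pointwise convergence of $t^{-1}(T_tF-F)$ to convergence in $L^{p}(\mu)$, confirming that the formula indeed computes the generator. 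As an independent check, note that for $F\in\mathcal{S}$ normalized as above, $T_t$ restricted to functions of $(z_1,\dots,z_n)$ is the $n$-fold tensor product of the one-dimensional complex Ornstein--Uhlenbeck semigroup (\ref{compp}) with $a=1$, $r=\sigma^2=\cos\theta$, whose generator is the operator $A$ of (\ref{ou.op}); summing the $n$ copies of $A$ returns $\mathrm{tr}\,\mathrm{DD}_{\theta}F(x)-\,{}_{B}\innp{x,\mathrm{G_{\theta}}F(x)}_{B^*}$, in agreement with (\ref{3-gene}).
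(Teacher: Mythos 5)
Your proof is correct, but it takes a genuinely different route from the paper's. You compute the generator as the limit of the difference quotient $t^{-1}(T_tF-F)$ at $t=0$ via a second-order Wirtinger--Taylor expansion around the contracted-and-rotated point $e^{-\mi\theta t}\xi$, and you kill or identify the increment terms using Gaussian moment identities: centering for the first-order terms, rotation invariance (Proposition~\ref{rotat}) for the $w_jw_k$ and $\bar w_j\bar w_k$ terms, and the mixed second moments $\int w_j\bar w_k\,\dif\mu=2\delta_{jk}$ together with $c(t)^2/t\to 2\cos\theta$ to produce $4\cos\theta\sum_j\partial_j\bar\partial_j f=\mathrm{tr}\,\mathrm{DD}_\theta F$. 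The paper instead differentiates $T_tF$ in $t$ for $t>0$ under the integral sign, which produces terms carrying the singular factor $\eta_j\cos\theta\, e^{-2t\cos\theta}/\sqrt{1-e^{-2t\cos\theta}}$, and then removes them by the Gaussian integration by parts associated with the complex creation operator (Lemma~2.3 of \cite{cl}), converting them into $4\cos\theta\,\partial_j\bar\partial_j f$ before letting $t\to0$ in $L^p$ as in \cite[Proposition~2.7]{shg}. Your route is more self-contained --- it needs no creation-operator duality and makes visible that the drift term comes from the deterministic contraction-rotation while the trace term comes from the second moments of the Gaussian increment --- at the price of having to control the Taylor remainder, which you correctly reduce to polynomial growth of the derivatives of $f$ plus finiteness of Gaussian moments; the paper's route avoids remainder estimates altogether by shifting the burden onto the integration-by-parts identity. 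Your final remark that $t^{-1}(T_tF-F)\to LF$ must hold in $L^p$, not merely pointwise, is exactly the point the paper also flags, and your dominated-convergence justification is adequate for functions in $\mathcal{S}$.
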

\begin{proof}
Suppose that $F\in \mathcal{S}$ is given by (\ref{sfc}). We may assume that $\set{\frac{\varphi_j}{\sqrt{2}},\,\frac{\bar{\varphi}_j}{\sqrt{2}}}$ is an orthonormal system of $H^*$. Thus
$\xi=(\innp{x,\varphi_1},\dots, \innp{x,\varphi})\in \Cnum^n$ has a $2n$-dimensional standard normal distribution and we have
\begin{equation*}
   T_t F(x)=\int_{\Cnum^n}\,f(e^{-(\cos \theta +\mi\sin\theta)t}\xi+\sqrt{1-e^{-2t\cos \theta}}\eta)\,(2\pi)^{-n}e^{-\abs{\eta}^2/2} \dif \eta.
\end{equation*}
When $t>0$,
\begin{align*}
   &\frac{\dif }{\dif t} T_t F(x)\\
   &=\frac{\dif }{\dif t} \int_{\Cnum^n}\,f(e^{-(\cos \theta +\mi\sin\theta)t}\xi+\sqrt{1-e^{-2t\cos \theta}}\eta)\,(2\pi)^{-n}e^{-\abs{\eta}^2/2} \dif \eta\\
   &=\sum_{j=1}^n\int_{\Cnum^n} (-\xi_j e^{\mi \theta}e^{-e^{\mi \theta}t}+ \frac{\eta_j\cos \theta e^{-2t\cos \theta}}{\sqrt{1-e^{-2t\cos \theta}}})\partial_j f(e^{-e^{\mi\theta}t}\xi+\sqrt{1-e^{-2t\cos \theta}}\eta) u(\dif\eta)\\
   &+ \sum_{j=1}^n\int_{\Cnum^n} (-\bar{\xi}_j e^{-\mi \theta}e^{-e^{-\mi \theta}t}+ \frac{\bar{\eta}_j\cos \theta e^{-2t\cos \theta}}{\sqrt{1-e^{-2t\cos \theta}}})\bar{\partial}_j f(e^{-e^{\mi\theta}t}\xi+\sqrt{1-e^{-2t\cos \theta}}\eta) u(\dif\eta)\\
   &=-e^{\mi \theta}e^{-e^{\mi \theta}t} \sum_{j=1}^n\xi_j \int_{\Cnum^n}\partial_j f u(\dif\eta) - e^{-\mi \theta}e^{-e^{-\mi \theta}t} \sum_{j=1}^n \bar{\xi}_j \int_{\Cnum^n} \bar{\partial}_j f u(\dif\eta) \\
   &+ {4\cos \theta e^{-2t\cos \theta}} \sum_{j=1}^n \partial_j \bar{\partial}_j f u(\dif\eta).
\end{align*}
The last equation is follows from  the ingegral by part of the complex creation operator(see Lemma2.3 of \cite{cl}). An argument similar to the one used in Proposition~2.7 of \cite{shg} shows that the convergence takes place in the topology of $L^p(B)$. Let $t\to 0$, we have
\begin{equation}
   L F(x)=-e^{\mi \theta}\sum^n_{j=1}\xi_j \partial_j f-e^{-\mi \theta}\sum^n_{j=1}\bar{\xi}_j\bar{ \partial}_j f +4\cos\theta \sum_{j=1}^n \partial_j \bar{\partial}_j f,
\end{equation}
which is exact (\ref{3-gene}).
\end{proof}
\section{It$\hat{\rm{o}}$-Wiener chaos decomposition, Eigenfunctions and the hypercontractivity}\label{ss4}
\begin{dfn}[Definition of the Hermite-Laguerre-It$\hat{\rm{o}}$ polynomials]\label{ijldf}
Let $m,n\in \Nnum$ and $z=x+\mi y$ with $x,y\in \Rnum$. We define the sequence on $\Cnum$
\begin{align}
  J_{0,0}(z)&=1,\nonumber\\
  J_{m,n}(z)&={2}^{m+n}(\partial^*)^m(\bar{\partial}^*)^n 1.\label{itldefn}
\end{align}
We call it the Hermite-Laguerre-It$\hat{\rm{o}}$ polynomial in the present paper.
\end{dfn}
One can show \cite{cl} that $\set{(m!n!2^{m+n})^{-\frac12}J_{m,n}(z):\,m,n\in \Nnum}$ is an orthonormal basis of $L^2_\Cnum(\nu)$ with $\dif\nu=\frac{1}{2\pi }e^{-\frac{x^2+y^2}{2}}\dif x\dif y$ and
\begin{equation}\label{2-eig}
    [e^{\mi \theta} z \frac{\partial}{\partial z}+ e^{-\mi \theta}\bar{z} \frac{\partial}{\partial \bar{z}}-4\cos \theta\frac{\partial^2}{\partial z\partial \bar{z}} ]J_{m,n}(z)=[(m+n)\cos \theta +\mi (m-n)\sin\theta]J_{m,n}(z).
\end{equation}

For a sequence $\mathbf{m}=\set{m_k}_{k=1}^{\infty}$, write $\abs{\mathbf{m}}=\sum\limits_{k}\,m_k$.
\begin{dfn}
   Take a complete orthonormal system in $\set{\varphi_k}\subseteq B^*$ in $H^*$ and fix it throughout the section. For two sequences $\mathbf{m}=\set{m_k}_{k=1}^{\infty},\,\mathbf{n}=\set{n_k}_{k=1}^{\infty}$ of nonnegative integrals with finite sum, define
  \begin{equation}\label{fourier}
    \mathbf{J}_{\mathbf{m},\mathbf{n}}(x):=\prod_{k} \frac{1}{\sqrt{2^{m_k+n_k}m_k!n_k!}}J_{m_k,n_k}(\innp{x,\,\varphi_k}).
  \end{equation}
  We name it the Fourier-Hermite-It$\hat{\rm o}$ polynomial. For two $m,n\in Z_{+}$, the closed subspace spanned by $\set{ \mathbf{J}_{\mathbf{m},\mathbf{n}}(x);\, \abs{\mathbf{m}}=m, \abs{\mathbf{n}}=n}$ in $L^2_{\Cnum}(B,\mu)$ is called the It$\hat{\rm o}$-Wiener chaos of degree of $(m,n)$ and is denoted by $\mathcal{H}_{m,n}$.
\end{dfn}

\begin{thm}
  For any fixed integer $m,n\ge 0$, the collection of functions
  \begin{equation}
    \set{\mathbf{J}_{\mathbf{m},\mathbf{n}};\abs{\mathbf{m}} =m,\,\abs{\mathbf{n}}=n }
  \end{equation}
  is an orthogonal basis for the space $\mathcal{H}_{m,n}$. And if $(m,n)$ vary then
  the collection of functions
  \begin{equation}
    \set{\mathbf{J}_{\mathbf{m},\mathbf{n}};\abs{\mathbf{m}} =m,\,\abs{\mathbf{n}}=n,\, m,n\ge 0 }
  \end{equation}
  is an orthogonal basis for the space $L^2_{\Cnum}(B,\mu)$. And $L^2_{\Cnum}(B,\mu)$ has the It$\hat{o}$-Wiener expansion in the following way:
  \begin{equation}
     L^2_{\Cnum}(B,\mu)=\bigoplus^{\infty}_{m=0}\bigoplus^{\infty}_{n=0}\mathcal{H}_{m,n}.
  \end{equation}
  The project from $L^2_{\Cnum}(B,\mu)$ to $\mathcal{H}_{m,n}$ is denoted by $\mathrm{J}_{m,n}$.
\end{thm}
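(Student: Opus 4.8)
The plan is to establish the result in three logically separated pieces: orthogonality of the $\mathbf{J}_{\mathbf{m},\mathbf{n}}$, their total completeness in $L^2_{\Cnum}(B,\mu)$, and the identification of the finite-rank spans with the chaos spaces $\mathcal{H}_{m,n}$. First I would reduce everything to the finite-dimensional statement already recorded in the excerpt, namely that $\{(m!n!2^{m+n})^{-1/2}J_{m,n}(z):m,n\in\Nnum\}$ is an orthonormal basis of $L^2_{\Cnum}(\nu)$ for the standard complex Gaussian $\nu$ on $\Cnum$. Since $\{\varphi_k\}\subseteq B^*$ is a complete orthonormal system in $H^*$, the family $\{\innp{x,\varphi_k}\}$ is, under $\mu$, a sequence of i.i.d.\ standard complex Gaussians (this follows from the characteristic function~(\ref{character func})). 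Hence for two finitely-supported multi-indices the product structure in~(\ref{fourier}) gives, by Fubini/independence,
\begin{equation*}
   \int_B \mathbf{J}_{\mathbf{m},\mathbf{n}}\,\overline{\mathbf{J}_{\mathbf{m}',\mathbf{n}'}}\,\dif\mu=\prod_k \frac{1}{\sqrt{2^{m_k+n_k}m_k!n_k!}\sqrt{2^{m_k'+n_k'}m_k'!n_k'!}}\int_{\Cnum}J_{m_k,n_k}\overline{J_{m_k',n_k'}}\,\dif\nu,
\end{equation*}
which vanishes unless $\mathbf{m}=\mathbf{m}'$ and $\mathbf{n}=\mathbf{n}'$ and equals $1$ otherwise; this proves orthonormality of the whole collection, and in particular orthogonality within each fixed $(m,n)$ block and orthogonality between different blocks (if $|\mathbf{m}|\neq|\mathbf{m}'|$ or $|\mathbf{n}|\neq|\mathbf{n}'|$ the multi-indices already differ).

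Next I would prove completeness of $\{\mathbf{J}_{\mathbf{m},\mathbf{n}}\}$ in $L^2_{\Cnum}(B,\mu)$. The standard route is: the linear span of $\{\mathbf{J}_{\mathbf{m},\mathbf{n}}\}$ contains all polynomials in finitely many of the variables $\innp{x,\varphi_k}$ and their conjugates (because for each fixed $k$, $\{J_{m,n}(\innp{x,\varphi_k}):m,n\}$ spans the same space as $\{\innp{x,\varphi_k}^m\overline{\innp{x,\varphi_k}}^n\}$, by triangularity of the definition~(\ref{itldefn})), hence it contains all cylinder functions of the form $g(\innp{x,\varphi_{k_1}},\dots,\innp{x,\varphi_{k_N}})$ with $g$ a polynomial. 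By the exponential moment bound coming from the Gaussian tail of $\mu$, such polynomials are dense in $L^2$ of any finite-dimensional Gaussian, so the closure contains every cylinder function $g(\innp{x,\varphi_{k_1}},\dots)$ with $g\in L^2(\nu^{\otimes N})$. Finally, since $\{\varphi_k\}$ is complete in $H^*$ and $H^*$ separates points of $B$ on a set of full $\mu$-measure, the $\sigma$-algebra generated by all $\innp{\cdot,\varphi_k}$ is (mod $\mu$) the Borel $\sigma$-algebra of $B$; a martingale-convergence / monotone-class argument then shows the cylinder functions are dense in $L^2_{\Cnum}(B,\mu)$. Combining orthonormality and completeness gives the orthogonal-basis claim for the whole space and the decomposition $L^2_{\Cnum}(B,\mu)=\bigoplus_{m,n}\mathcal{H}_{m,n}$ by grouping basis vectors according to $(|\mathbf{m}|,|\mathbf{n}|)$.

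It remains to check that the definition of $\mathcal{H}_{m,n}$ as the closed span of $\{\mathbf{J}_{\mathbf{m},\mathbf{n}}:|\mathbf{m}|=m,|\mathbf{n}|=n\}$ is intrinsic, i.e.\ does not depend on the chosen orthonormal system $\{\varphi_k\}$; this is what makes the projection $\mathrm{J}_{m,n}$ well defined. For this I would use the eigenfunction relation~(\ref{2-eig}) lifted to $B$: each $\mathbf{J}_{\mathbf{m},\mathbf{n}}$ with $|\mathbf{m}|=m$, $|\mathbf{n}|=n$ is an eigenfunction of $L$ (equivalently of $T_t$) with eigenvalue $-[(m+n)\cos\theta+\mi(m-n)\sin\theta]$, by Proposition~\ref{pp3.3} together with the product rule and~(\ref{2-eig}); so $\mathcal{H}_{m,n}$ is contained in the eigenspace of $T_t$ for the eigenvalue $e^{-t[(m+n)\cos\theta+\mi(m-n)\sin\theta]}$. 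Since for $\theta\in(-\tfrac{\pi}{2},\tfrac{\pi}{2})$ distinct pairs $(m,n)$ yield distinct eigenvalues (the real part $(m+n)\cos\theta$ fixes $m+n$ and then the imaginary part $(m-n)\sin\theta$ fixes $m-n$, using $\cos\theta,\sin\theta$ not both degenerate — and when $\theta=0$ one falls back on the standard real Wiener-chaos argument via the Hermite grading), these eigenspaces are mutually orthogonal and their sum is dense, forcing $\mathcal{H}_{m,n}$ to equal the full eigenspace, which is manifestly independent of the choice of $\{\varphi_k\}$. The main obstacle I anticipate is precisely this last independence/intrinsic-characterization step together with the density of polynomials in $L^2_{\Cnum}(B,\mu)$ in the infinite-dimensional setting; the orthogonality computation is routine once independence of the $\innp{x,\varphi_k}$ is in hand.
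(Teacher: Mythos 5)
Your proposal is correct in outline, but note that the paper does not actually prove this theorem: it simply cites Example 3.32 of Janson's \emph{Gaussian Hilbert Spaces} and remarks that an elementary proof can be modelled on Theorems 9.5.4 and 9.5.7 of Kuo. What you have written is essentially that elementary proof spelled out: orthonormality by factorizing the integral over the independent standard complex Gaussians $\innp{x,\varphi_k}$ and invoking the one-dimensional orthogonality of the $J_{m,n}$ stated before (\ref{2-eig}); completeness by the triangularity of (\ref{itldefn}) (so the span contains all polynomials in the $\innp{x,\varphi_k}$ and their conjugates), density of polynomials in finite-dimensional Gaussian $L^2$ via exponential integrability, and a monotone-class/martingale-convergence step to pass from cylinder functions to all of $L^2_{\Cnum}(B,\mu)$, using that the $\varphi_k$ generate the Borel $\sigma$-field mod $\mu$. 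Two small remarks. First, for the independence claim you should note explicitly that both the covariance and the pseudo-covariance vanish, i.e.\ $(\varphi_j,\varphi_k)_{H^*}=0$ for $j\ne k$ \emph{and} $\Enum[\innp{x,\varphi_j}\innp{x,\varphi_k}]=0$ for all $j,k$; this is what makes the $\innp{x,\varphi_k}$ i.i.d.\ rotation-invariant complex Gaussians with law $\nu$, and it comes from the choice $\set{\varphi_k/\sqrt2,\bar\varphi_k/\sqrt2}$ being a c.o.n.s., not merely from (\ref{character func}) in the abstract. Second, your final step on basis-independence of $\mathcal{H}_{m,n}$ goes beyond what the statement requires (the paper fixes the c.o.n.s.\ $\set{\varphi_k}$ for the whole section), and as you yourself note the eigenvalue argument degenerates at $\theta=0$, where all $(m,n)$ with the same $m+n$ share the eigenvalue, so there you would genuinely need the separate real-chaos grading argument rather than the spectral one; since it is optional, this does not affect the validity of the proof of the stated theorem.
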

The above theorem is well known, which is exact Example 3.32 of \cite[p31]{jan} which is from view of the Gaussian Hilbert spaces. The reader can also give an elementary proof using an argument similar to Theorem~9.5.4 and 9.5.7 of \cite{guo}. 
\begin{thm}\label{th4.4}
   Let $\mathbf{J}_{\mathbf{m},\mathbf{n}}(x)$ be a Fourier-Hermite-It$\hat{o}$ polynomial defined by (\ref{fourier}). Denote $m=\abs{\mathbf{m}},\,n=\abs{\mathbf{n}}$. Then
   \begin{align}
      L \mathbf{J}_{\mathbf{m},\mathbf{n}}(x)&=-[(m+n)\cos\theta  +\mi(m-n)\sin\theta]\mathbf{J}_{\mathbf{m},\mathbf{n}}(x),\label{lt}\\
      T_t \mathbf{J}_{\mathbf{m},\mathbf{n}}(x)&= e^{-[(m+n)\cos\theta  +\mi(m-n)\sin\theta]t}\mathbf{J}_{\mathbf{m},\mathbf{n}}(x).\label{tt}
   \end{align}
\end{thm}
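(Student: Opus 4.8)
The strategy is to reduce the infinite-dimensional statement \eqref{lt}--\eqref{tt} to the one-dimensional eigenvalue identity \eqref{2-eig} via the tensor-product structure of $\mathbf{J}_{\mathbf{m},\mathbf{n}}$, and then deduce the semigroup identity \eqref{tt} from \eqref{lt} either by exponentiation or by a direct computation with \eqref{sd_ou}. First I would observe that, after fixing the c.o.n.s.\ $\set{\varphi_k}$, a Fourier--Hermite--It\^o polynomial $\mathbf{J}_{\mathbf{m},\mathbf{n}}$ is a finite product of factors $c_k J_{m_k,n_k}(\innp{x,\varphi_k})$, hence lies in $\mathcal{S}$, so Proposition~\ref{pp3.3} applies and $L\mathbf{J}_{\mathbf{m},\mathbf{n}}$ is given by \eqref{3-gene}. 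Writing $z_k=\innp{x,\varphi_k}$, the underlying smooth function is $f(z_1,\dots,z_N)=\prod_k c_k J_{m_k,n_k}(z_k)$, and because $f$ factorizes with each variable appearing in exactly one factor, the operator $e^{\mi\theta}z_k\partial_k + e^{-\mi\theta}\bar z_k\bar\partial_k - 4\cos\theta\,\partial_k\bar\partial_k$ acts on $f$ as a \emph{sum} of operators each touching only the $k$-th factor (no cross terms $\partial_k\bar\partial_j$ with $j\ne k$ arise since $\partial_j$ annihilates every factor but the $j$-th).

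Next I would invoke \eqref{2-eig} for each one-dimensional factor: the $k$-th summand produces the eigenvalue $(m_k+n_k)\cos\theta + \mi(m_k-n_k)\sin\theta$ times $\mathbf{J}_{\mathbf{m},\mathbf{n}}$. Summing over $k$ and using $\sum_k m_k = m$, $\sum_k n_k = n$, the total eigenvalue is $(m+n)\cos\theta + \mi(m-n)\sin\theta$, and comparing with \eqref{3-gene} (which carries an overall minus sign, $L = \mathrm{tr}\,\mathrm{DD}_\theta - {}_B\innp{x,\mathrm{G}_\theta F}_{B^*}$) gives exactly \eqref{lt}. One small bookkeeping point to check: the normalization constants $c_k = (2^{m_k+n_k}m_k!n_k!)^{-1/2}$ cancel cleanly because \eqref{2-eig} is an eigen-identity and does not disturb the leading product structure; and that $\mathrm{tr}\,\mathrm{DD}_\theta F = 4\cos\theta\sum_j\partial_j\bar\partial_j f$ together with $\innp{x,\mathrm{G}_\theta F}_{B^*} = \sum_j[e^{\mi\theta}z_j\partial_j f + e^{-\mi\theta}\bar z_j\bar\partial_j f]$ assemble precisely into the bracketed operator in \eqref{2-eig} applied term by term.

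For \eqref{tt} I would give the direct argument rather than appeal to abstract semigroup theory: apply \eqref{sd_ou} to $F = \mathbf{J}_{\mathbf{m},\mathbf{n}}$, use the product structure to factor the Gaussian integral over $\Cnum^N$ into a product of one-dimensional integrals, and in each factor compute $\int_{\Cnum} J_{m_k,n_k}(e^{-(\cos\theta+\mi\sin\theta)t}z_k + \sqrt{1-e^{-2t\cos\theta}}\,\eta)\,\nu(\mathrm d\eta)$. This last integral equals $e^{-[(m_k+n_k)\cos\theta+\mi(m_k-n_k)\sin\theta]t}J_{m_k,n_k}(z_k)$ — this is the Mehler-type formula for the Hermite--Laguerre--It\^o polynomials, which one can read off from \eqref{itldefn} and the action of the creation operators, or simply derive by noting that $t\mapsto T_tF$ solves $\frac{\mathrm d}{\mathrm dt}T_tF = LT_tF$ with $T_0F = F$, so on the eigenfunction $\mathbf{J}_{\mathbf{m},\mathbf{n}}$ this ODE integrates to \eqref{tt}. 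Taking the product over $k$ yields the claimed scalar factor.

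The only genuine obstacle is the justification of the one-dimensional Mehler formula / the passage from \eqref{lt} to \eqref{tt} at the level of rigor: one must know that $\mathbf{J}_{\mathbf{m},\mathbf{n}}$ is in the domain of $L$ and that $T_t$ leaves the finite chaos $\mathcal{H}_{m,n}$ invariant (so that the ODE argument is legitimate), which follows from the strong continuity and contractivity of $T_t$ on $L^p$ stated earlier together with the fact that $T_t$ commutes with the rotation and scaling actions of Proposition~\ref{rotat}. Everything else is the routine tensorization bookkeeping sketched above. I would therefore lead with the reduction to \eqref{2-eig}, state the one-dimensional Mehler identity as the key lemma (with a one-line proof via the eigenvalue equation), and close with the product computation.
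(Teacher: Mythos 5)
Your proposal is correct and takes essentially the same route as the paper: the paper proves \eqref{lt} by combining Proposition~\ref{pp3.3} with the one-dimensional eigenvalue identity \eqref{2-eig} (the tensorization bookkeeping you spell out), and deduces \eqref{tt} from \eqref{lt} via the semigroup (Kolmogorov) equation, which is exactly your ODE argument. Your extra worry about invariance of the chaos $\mathcal{H}_{m,n}$ under $T_t$ is not actually needed --- once $\mathbf{J}_{\mathbf{m},\mathbf{n}}\in\mathcal{S}$ is in the domain of $L$ with $L\mathbf{J}_{\mathbf{m},\mathbf{n}}=\lambda\mathbf{J}_{\mathbf{m},\mathbf{n}}$, the relation $\frac{\dif}{\dif t}T_t\mathbf{J}_{\mathbf{m},\mathbf{n}}=T_tL\mathbf{J}_{\mathbf{m},\mathbf{n}}=\lambda T_t\mathbf{J}_{\mathbf{m},\mathbf{n}}$ integrates directly --- but this does not affect the correctness of your argument.
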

\begin{proof}
Proposition~\ref{pp3.3} and (\ref{2-eig}) imply (\ref{lt}) directly. (\ref{tt}) follows from (\ref{lt}) and semigroup equation (or say: Kolmogorov's equaiton).
\end{proof}
In fact, (\ref{tt}) is an alternative procedure for introducing the OU semigroup.
Similar to the symmetric OU semigroup(see \cite[p54]{Nua}), we define a nonsymmetric OU semigroup:
\begin{dfn}
The nonsymmetric OU semigroup is the one-parameter semigroup $\set{T_t,\,t\ge 0}$ of contraction operators on $L^2_{\Cnum}(B) $ defined by
\begin{equation}\label{ou}
   T_t F(x)=\sum_{m=0}^{\infty}\sum_{n=0}^{\infty} e^{-[(m+n)\cos\theta  +\mi(m-n)\sin\theta]t}\mathrm{J}_{m,n}F
\end{equation}
for any $F\in L^2_{\Cnum}(B)$.
\end{dfn}

Finially, similar to the proof of \cite[Theorem 2.11]{shg}, we have the hypercontractivity of the OU semigroup.
\begin{prop}
  For the fixed $t\ge 0$ and $p>1$, set $q(t)=e^{2t}(p-1)+1$. Then
  \begin{equation}
     \norm{T_t F}_{q(t)}\le \norm{F}_{p},\quad \forall F\in L^p(B,\mu).
  \end{equation}
\end{prop}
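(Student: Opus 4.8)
The plan is to reduce the infinite‑dimensional statement to the finite‑dimensional (indeed scalar) case, where hypercontractivity of the Mehler‑type semigroup is classical, and then use the chaos decomposition to pass to the limit. First I would observe that it suffices to prove the inequality for $F\in\mathcal S$ of the form $F(x)=f(\innp{x,\varphi_1},\dots,\innp{x,\varphi_n})$ with $\set{\varphi_j/\sqrt2,\bar\varphi_j/\sqrt2}$ an orthonormal system of $H^*$, since such $F$ are dense in $L^p(B,\mu)$ for every $p\ge1$ and $T_t$ is a contraction on each $L^p$, so the estimate extends by a standard approximation argument (taking $f$ with polynomial growth and truncating). For such $F$, by the very definition \eqref{sd_ou} of $T_t$ and the fact that $(\innp{x,\varphi_1},\dots,\innp{x,\varphi_n})$ is a $2n$‑dimensional standard Gaussian vector, $\norm{T_tF}_{q(t)}$ and $\norm{F}_p$ are computed against the Gaussian measure on $\Cnum^n\cong\Rnum^{2n}$, and the operator acting on $f$ is the finite‑dimensional analogue
\[
   \widetilde T_tf(\xi)=\int_{\Cnum^n}f\bigl(e^{-(\cos\theta+\mi\sin\theta)t}\xi+\sqrt{1-e^{-2t\cos\theta}}\,\eta\bigr)\,\gamma(\dif\eta).
\]

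The key point is then to handle the \emph{rotation} factor $e^{-\mi\sin\theta\,t}$ in the scaling coefficient. Write $e^{-(\cos\theta+\mi\sin\theta)t}=\rho\,U$ where $\rho=e^{-t\cos\theta}=e^{-s}$ with $s=t\cos\theta\ge0$ and $U$ is the unitary rotation by angle $-t\sin\theta$ on each copy of $\Cnum$. Because the Gaussian measure $\gamma$ on $\Cnum^n$ is rotation invariant (Proposition~\ref{rotat}), a change of variables shows $\widetilde T_tf=\widetilde R_t\,M_s f$, where $M_s$ is the genuine (symmetric) Mehler semigroup with contraction parameter $\rho=e^{-s}$ and $\widetilde R_t$ is composition with the measure‑preserving rotation $U$. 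Composition with a measure‑preserving map is an isometry on every $L^q(\gamma)$, so $\norm{\widetilde T_tf}_{q}=\norm{M_sf}_{q}$ for any $q$. Now Nelson's hypercontractivity theorem for the real/complex Ornstein–Uhlenbeck semigroup gives $\norm{M_sf}_{q'}\le\norm f_p$ whenever $q'-1\le e^{2s}(p-1)$, i.e. $q'\le e^{2t\cos\theta}(p-1)+1$.

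The one remaining discrepancy is that the exponent in the statement is $q(t)=e^{2t}(p-1)+1$, not $e^{2t\cos\theta}(p-1)+1$, and since $\cos\theta\in(0,1]$ we have $e^{2t\cos\theta}(p-1)+1\le e^{2t}(p-1)+1=q(t)$; thus I would invoke monotonicity of $L^q(\gamma)$ norms on a probability space (or Jensen) to bound $\norm{M_sf}_{q(t)}\le\norm{M_sf}_{e^{2s}(p-1)+1}\le\norm f_p$. Chaining the three steps — isometry under rotation, Nelson hypercontractivity, and $L^q$‑norm monotonicity — yields $\norm{T_tF}_{q(t)}\le\norm F_p$ for $F\in\mathcal S$, and then density finishes the proof. (Alternatively, one can mimic \cite[Theorem 2.11]{shg} directly, using the chaos decomposition \eqref{ou} and the two‑point / Gaussian hypercontractivity inequality together with the fact that $|e^{-[(m+n)\cos\theta+\mi(m-n)\sin\theta]t}|=e^{-(m+n)t\cos\theta}\le e^{-(m+n)s}$, so that the eigenvalue moduli are dominated by those of the symmetric semigroup $M_s$; since hypercontractivity of a normal operator with a spectral/chaos decomposition depends only on the moduli of the eigenvalues, the bound transfers.) The main obstacle is precisely this bookkeeping with the complex eigenvalue $e^{-[(m+n)\cos\theta+\mi(m-n)\sin\theta]t}$: one must be careful that the nonsymmetry (the imaginary part) does not affect $L^q$‑norm estimates, which is exactly what the rotation‑invariance of $\mu$ guarantees, so the entire argument hinges on reducing to the symmetric case via Proposition~\ref{rotat}.
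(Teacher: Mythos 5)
Your reduction to $\Cnum^n$ by density of $\mathcal S$, the factorization $\widetilde T_t=R_U\circ M_s$ with $s=t\cos\theta$ coming from rotation invariance of the Gaussian measure, the fact that $R_U$ is an isometry on every $L^q(\gamma)$, and the appeal to Nelson's theorem are all sound; together they prove $\norm{T_tF}_{q'}\le\norm{F}_p$ for every $q'\le e^{2t\cos\theta}(p-1)+1$. The gap is your last step: monotonicity of $L^q$ norms on a probability space runs in the \emph{opposite} direction to what you need. Since $q(t)=e^{2t}(p-1)+1\ge e^{2t\cos\theta}(p-1)+1$, monotonicity gives $\norm{M_sf}_{e^{2s}(p-1)+1}\le\norm{M_sf}_{q(t)}$, so the chain $\norm{M_sf}_{q(t)}\le\norm{M_sf}_{e^{2s}(p-1)+1}\le\norm{f}_p$ is invalid. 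Moreover no repair is possible: taking $F=1+\varepsilon\,\Re\innp{x,\varphi_1}$, one has $T_tF=1+\varepsilon e^{-t\cos\theta}\Re\bigl(e^{-\mi t\sin\theta}\innp{x,\varphi_1}\bigr)$, which has the same law as $1+\varepsilon e^{-t\cos\theta}\Re\innp{x,\varphi_1}$, and the standard second-order expansion in $\varepsilon$ of $\norm{T_tF}_q\le\norm{F}_p$ forces $(q-1)e^{-2t\cos\theta}\le p-1$. Hence for $\theta\neq0$, $t>0$ the inequality with the printed exponent $e^{2t}(p-1)+1$ fails; the sharp exponent is $e^{2t\cos\theta}(p-1)+1$, which is exactly what your main argument delivers.

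For comparison, the paper's own proof is genuinely different in route but lands in the same place: it establishes the two-function form $\int_{\Cnum^n}T_tf\,g\,\dif\gamma\le\norm{f}_p\norm{g}_{q'}$ by a self-contained stochastic argument (martingale representation for complex Brownian motion and It\^o's formula applied to $M_t^{1/p}N_t^{1/q'}$, with the rotation built into the coupling $\hat\zeta=\lambda e^{\mi a}\zeta+\sqrt{1-\lambda^2}\tilde\zeta$), and with its choice $\lambda=e^{-t\cos\theta}$ the exponent it actually obtains is $q=1+(p-1)/\lambda^2=1+(p-1)e^{2t\cos\theta}$, not $q(t)$; the mismatch is thus in the statement of the proposition, not something your proof can bridge. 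Your route (factor out the rotation and quote Nelson) is shorter at the price of invoking Nelson's theorem rather than reproving it; either way, you should state and prove the result with $q(t)=e^{2t\cos\theta}(p-1)+1$ and delete the wrong-way monotonicity step.
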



\begin{proof}
Since $\mathcal{S}$ is dense in $ L^p(B,\mu)$, it is enough to show this when $B=\Cnum^n$. It is enough to show that for any $0<a\le f,\,g\le b$ where $f,\,g$ are Borel functions on $\Cnum^n$, the following inequality holds \cite[Theorem 2.11]{shg}.
\begin{equation}\label{pphy}
   \int_{\Cnum^n} T_t f(\xi)g(\xi)(2\pi)^{-n}e^{-\abs{\xi}^2/2}\dif \xi \le \norm{f}_{p}\norm{g}_{q(t)'}.
\end{equation}

Let $\zeta_t=(\zeta_t^{(1)},\,\zeta_t^{(2)},\dots,\zeta_t^{(n)})',\,0\le t\le 1$ be an n-dimensional standard complex Brownian motion. $\bar{\zeta}_t$ is the complex conjugate.
Let $\tilde{\zeta}_t$ be an independent copy of $\zeta_t$. For a given $0<\lambda<1$ and $a\in \Rnum$, define
\begin{equation}
  \hat{\zeta}_t=\lambda e^{\mi a}\zeta_t+\sqrt{1-\lambda^2}\tilde{\zeta}_t.
\end{equation}
Clearly, $\hat{\zeta}_t$ is still a standard complex Browian motion. Set $\mathcal{F}^\zeta_t=\sigma(\zeta_s;\,0\le s\le t),\, \mathcal{F}^{\hat{\zeta}}_t=\sigma(\hat{\zeta}_s;\,0\le s\le t)$. Define martingales
\begin{align*}
   M_t=E[f^p(\hat{\zeta}_1)|\, \mathcal{F}^{\hat{\zeta}}_t],\quad 0\le t\le 1,\\
   N_t=E[g^{q'}({\zeta}_1)|\, \mathcal{F}^{{\zeta}}_t],\quad 0\le t\le 1,
\end{align*}
where $q'$ is the conjugate number of $$q=1+(p-1)/\lambda^2.$$
It follows from the martingale (on filtrations induced by the complex Brownian motion) representation theorem that
\begin{align*}
   M_t=M_0+ \int_0^t\,\theta_s\dif \hat{\zeta}_s+ \int_0^t \vartheta_s\dif \bar{\hat{\zeta}}_s,\quad
   N_t=N_0+\int_0^t\,\phi_s\dif \zeta_s+ \int_0^t \varphi_s\dif \bar{\zeta}_s.
\end{align*}
Since $ M_t,\,N_t\in \Rnum$, $\vartheta_s=\bar{\theta}_s$ and $\varphi_s=\bar{\phi}_s$.
It follows from the It$\hat{\rm o}$'s table that $\dif M_t\dif M_t=4\abs{\theta_t}^2 \dif t,\,\dif N_t\dif N_t=4\abs{\phi_t}^2\dif t$ and $\dif M_t \dif N_t=2\lambda(e^{\mi a}\theta_t\varphi_t+e^{-\mi a}\vartheta_t\phi_t)\dif t$. By It$\hat{\rm o}$'s formula, we have
\begin{align*}
   \dif (M_t^{1/p}N_t^{1/q'})&=\frac{1}{p}M_t^{1/p-1}N_t^{1/q'}\dif M_t+\frac{1}{q'}M_t^{1/p}N_t^{1/q'-1}\dif N_t\\
   &+\frac12 \frac{1}{p}( \frac{1}{p}-1)M_t^{1/p-2}N_t^{1/q'}\dif M_t\dif M_t\\
   &+\frac{1}{p}\frac{1}{q'}M_t^{1/p-1}N_t^{1/q'-1}\dif M_t\dif N_t\\
   &+ \frac12 \frac{1}{q'}( \frac{1}{q'}-1)M_t^{1/p}N_t^{1/q'-2}\dif N_t\dif N_t
\end{align*}
Note that $\sqrt{(p-1)(q'-1)}=\lambda$, therefore,
\begin{align*}
   &E(M_t^{1/p}N_t^{1/q'})-E(M_0^{1/p}N_0^{1/q'})\\
   &= -2 E[\int_0^t M_t^{1/p-2}N_t^{1/q'-2}[\frac{1}{p}( 1-\frac{1}{p})N_t^2\abs{\theta_t}^2-2\frac{1}{p}\frac{1}{q'}\lambda M_tN_t\Re(e^{\mi a}\theta_t\varphi_t)\\
   &\quad +\frac{1}{q'}( 1-\frac{1}{q'})M_t^2\abs{\phi_t}^2 ]\dif t]\\
   &=-2 E\big[\int_0^t M_t^{1/p-2}N_t^{1/q'-2}\abs{ \frac{\sqrt{p-1}}{p}N_t \theta_t- \frac{\sqrt{q'-1}}{q'}e^{\mi a}M_t\phi_t}^2\dif t\big]\\
   &\le 0.
\end{align*}
Let $t=1$ in the above inequality displayed, we have
\begin{equation*}
   E(f(\hat{\zeta}_1)g(\zeta_1))\le E[f^p(\hat{\zeta}_1)]^{1/p}E[g^{q'}(\zeta_1)]^{1/{q'}}.
\end{equation*}

From the definition of $\hat{\zeta}$ and letting $\lambda=e^{-t\cos\theta},\,a=\sin\theta$,  the above inequality displayed is exact (\ref{pphy}). This ends the proof.
\end{proof}

An argument similar to the one used in\cite[Proposition 2.14, 2.15]{shg} shows the following boundedness of operator in $L^p(B,\mu)\,(p>1)$.
\begin{cor}\label{cor4}
$\mathcal{H}_{m,n}$, the It$\hat{\rm o}$-Wiener chaos of degree of $(m,n)$, is a closed subspace in $L^p(B,\mu)\,(p>1)$ and its norms $\norm{\cdot}_p$ in $L^p(B,\mu)\,(p>1)$ are equivalent to each other. In addition, the project operator $\mathrm{J}_{m,n}$ is bounded in $L^p(B,\mu)\,(p>1)$ and satisfies that
\begin{align}
   \mathrm{J}_{m,n}\mathrm{J}_{i,j}=\mathrm{J}_{i,j}\mathrm{J}_{m,n}=\delta_{m,i}\delta_{n,j}\mathrm{J}_{m,n}\\
   T_t\mathrm{J}_{m,n}=\mathrm{J}_{m,n}T_t=e^{-(m+n)t\cos \theta-\mi (m-n)t\sin\theta}\mathrm{J}_{m,n}.
\end{align}
\end{cor}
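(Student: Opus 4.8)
The plan is to follow the scheme of \cite[Proposition 2.14, 2.15]{shg}, fed by the hypercontractivity estimate just established. Two features of the present, nonsymmetric situation will be used throughout. First, by Theorem~\ref{th4.4} the operator $T_t$ acts on $\mathcal{H}_{m,n}$ as multiplication by the scalar $\lambda_{m,n}(t):=e^{-[(m+n)\cos\theta+\mi(m-n)\sin\theta]t}$, whose modulus $e^{-(m+n)t\cos\theta}$ lies in $(0,1]$ because $\cos\theta>0$; in particular it is bounded, never zero, so $T_t$ is boundedly invertible on each $\mathcal{H}_{m,n}$. Second, although $L$ and $T_t$ are not self-adjoint, the chaos decomposition $L^2_{\Cnum}(B,\mu)=\bigoplus_{m,n}\mathcal{H}_{m,n}$ is \emph{orthogonal} (the $\mathbf{J}_{\mathbf{m},\mathbf{n}}$ form an orthogonal basis), hence each $\mathrm{J}_{m,n}$ is an orthogonal (thus self-adjoint) projection on $L^2_{\Cnum}(B,\mu)$. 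Write $\mathcal{H}^{0}_{m,n}$ for the linear span of $\set{\mathbf{J}_{\mathbf{m},\mathbf{n}}:\abs{\mathbf{m}}=m,\ \abs{\mathbf{n}}=n}$, whose $L^2$-closure is $\mathcal{H}_{m,n}$.

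\emph{Step 1 (equivalence of norms, and closedness).} For $F\in\mathcal{H}^{0}_{m,n}$ one has $T_tF=\lambda_{m,n}(t)F$. If $p\ge2$, apply the hypercontractivity inequality with base exponent $2$ and choose $t\ge0$ with $e^{2t}+1\ge p$; since $\mu$ is a probability measure, $\norm{F}_p\le\norm{F}_{e^{2t}+1}=\abs{\lambda_{m,n}(t)}^{-1}\norm{T_tF}_{e^{2t}+1}\le e^{(m+n)t\cos\theta}\norm{F}_2$, while $\norm{F}_2\le\norm{F}_p$ is trivial. If $1<p<2$, apply the inequality with base exponent $p$ and choose $t\ge0$ with $e^{2t}(p-1)+1\ge2$; then $\norm{F}_2\le\norm{F}_{e^{2t}(p-1)+1}\le e^{(m+n)t\cos\theta}\norm{F}_p$, while $\norm{F}_p\le\norm{F}_2$. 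Thus on $\mathcal{H}^{0}_{m,n}$ every norm $\norm{\cdot}_p$ ($p>1$) is equivalent to $\norm{\cdot}_2$, with constants depending only on $m,n,p,\theta$. Combining this with the inclusions $L^r\hookrightarrow L^s$ ($r\ge s$) valid on a probability space, a sequence in $\mathcal{H}^{0}_{m,n}$ is $\norm{\cdot}_p$-Cauchy if and only if it is $\norm{\cdot}_2$-Cauchy, with the same limit; hence the $L^p$-closure of $\mathcal{H}^{0}_{m,n}$ equals $\mathcal{H}_{m,n}$ for every $p>1$. This yields the first two assertions: $\mathcal{H}_{m,n}$ is closed in $L^p(B,\mu)$ and its norms $\norm{\cdot}_p$ are mutually equivalent.

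\emph{Step 2 ($L^p$-boundedness of $\mathrm{J}_{m,n}$, and the identities).} For $p\ge2$ one has $L^p\subseteq L^2$, so $\mathrm{J}_{m,n}$ is already defined on $L^p$ and $\norm{\mathrm{J}_{m,n}F}_p\le C_{m,n,p}\norm{\mathrm{J}_{m,n}F}_2\le C_{m,n,p}\norm{F}_2\le C_{m,n,p}\norm{F}_p$, using Step~1 and that the orthogonal projection $\mathrm{J}_{m,n}$ is an $L^2$-contraction. For $1<p<2$ I would argue by duality with $p'=p/(p-1)>2$: for $F\in\mathcal{S}$ and $G\in L^{p'}\subseteq L^2$, the self-adjointness of $\mathrm{J}_{m,n}$ on $L^2$ and the case $p'\ge2$ already established give
\[
\Bigl|\int_B(\mathrm{J}_{m,n}F)\,\overline{G}\,\dif\mu\Bigr|=\bigl|\innp{\mathrm{J}_{m,n}F,\,G}_{L^2}\bigr|=\bigl|\innp{F,\,\mathrm{J}_{m,n}G}_{L^2}\bigr|\le\norm{F}_p\,\norm{\mathrm{J}_{m,n}G}_{p'}\le C_{m,n,p'}\,\norm{F}_p\,\norm{G}_{p'},
\]
so that $\norm{\mathrm{J}_{m,n}F}_p\le C_{m,n,p'}\norm{F}_p$; as $\mathcal{S}$ is dense in $L^p(B,\mu)$, $\mathrm{J}_{m,n}$ extends to a bounded operator there. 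Finally, on $L^2$ the relations $\mathrm{J}_{m,n}\mathrm{J}_{i,j}=\mathrm{J}_{i,j}\mathrm{J}_{m,n}=\delta_{m,i}\delta_{n,j}\mathrm{J}_{m,n}$ are immediate from the orthogonality of the $\mathcal{H}_{m,n}$, and $T_t\mathrm{J}_{m,n}=\mathrm{J}_{m,n}T_t=\lambda_{m,n}(t)\mathrm{J}_{m,n}$ follows from Theorem~\ref{th4.4} and the $L^2$-continuity of $T_t$ by expanding $F=\sum_{i,j}\mathrm{J}_{i,j}F$; since $T_t$ is an $L^p$-contraction and $\mathrm{J}_{m,n}$ is $L^p$-bounded, and these relations hold on the $L^p$-dense set $\mathcal{S}$, they hold on all of $L^p(B,\mu)$.

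The step I expect to demand genuine care is the boundedness of $\mathrm{J}_{m,n}$ for $1<p<2$: there $L^p\not\subseteq L^2$, so $\mathrm{J}_{m,n}$ is not defined on $L^p$ \emph{a priori} and must be constructed by the above duality argument, which rests on the self-adjointness of the chaos projections. It is worth stressing that this self-adjointness persists in the nonsymmetric model because passing from the symmetric to the rotated Ornstein-Uhlenbeck semigroup changes only the eigenvalues $\lambda_{m,n}(t)$ of $T_t$, not its eigenspaces $\mathcal{H}_{m,n}$.
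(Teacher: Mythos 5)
Your proposal is correct and follows essentially the route the paper intends: the paper gives no written proof but refers to the argument of \cite[Propositions 2.14, 2.15]{shg}, which is exactly the hypercontractivity-plus-duality scheme you carry out (norm equivalence on each chaos via the eigenvalue relation $T_tF=\lambda_{m,n}(t)F$, $L^p$-boundedness of $\mathrm{J}_{m,n}$ for $p\ge 2$ directly and for $1<p<2$ by duality using the $L^2$-orthogonality of the chaoses, then the identities by density). Your observation that the nonsymmetric rotation changes only the eigenvalues, not the eigenspaces, is precisely why the symmetric-case argument transfers unchanged.
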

\vskip 0.2cm {\small {\bf  Acknowledgements}}\   This work was
supported by  NSFC( No.11101137.) 


\end{document}